\newtheorem{theorem}{Theorem}[section]
\newtheorem{proposition}[theorem]{Proposition}
\newtheorem{lemma}[theorem]{Lemma}
\newtheorem{corollary}[theorem]{Corollary}
\newtheorem{example}[theorem]{Example}
\newcommand{\N}{\mathcal N}
\renewcommand{\P}{\mathcal P}
\newcommand{\EE}{\mathbb E}
\newcommand{\NN}{\mathbb N}
\newcommand{\PP}{\mathbb P}
\newcommand{\RR}{\mathbb R}
\DeclareMathOperator{\supp}{supp}
\DeclarePairedDelimiter\floor{\lfloor}{\rfloor}
\begin{document}

\title[Quasi-Assouad Dimensions for Random Measures]
      {Quasi-Assouad dimensions for random measures supported on $[0,1]^d$}

\author[W. Shen]{\textbf{Wanchun Shen} \\
Department of Pure Mathematics\\
University of Waterloo\\
Waterloo, ON, Canada \\
(w35shen@edu.uwaterloo.ca)}

\label{firstpage}
\maketitle 

\begin{abstract}
We introduce a probability distribution on $\P([0,1]^d)$, the space of all Borel probability measures on $[0,1]^d$. Under this distribution, almost all measures are shown to have infinite upper quasi-Assouad dimension and zero lower quasi-Assouad dimension (hence the upper and lower Assouad dimensions are almost surely infinite or zero). We also indicate how the results extend to other Assouad-like dimensions.
\end{abstract}




\section{Introduction} \label{sec:intro}
The \emph{Assouad dimension} of a metric space was first introduced by Assouad \cite{Assouad-A-dim, Assouad-A-dim2} in relation to the study of embedding problems. It was observed in \cite{CWY-A-star-dim} to coincide with the \emph{star dimension} introduced by Furstenberg \cite{FurstenbergExposition}.

Variants of the Assouad dimension, the \emph{lower Assouad dimension} \cite{L-lower-A-dim}, \emph{quasi-Assouad dimensions} \cite{LX-qA-dim} and \emph{$\Phi$-dimensions} \cite{GHM-phi-dim} (which includes quasi-Assouad dimensions and \emph{$\theta$-dimensions} \cite{FY-theta-dim} as special cases), were defined and studied by various authors. The analogous \emph{Assouad dimension} for measures, known also as the \emph{regularity dimension}, was first studied in \cite{KLV-A-dim-meas, KLV-A-dim-meas2, FH-upper-reg-dim}. Analogs of \emph{quasi-Assouad dimension} for measures were introduced and investigated in \cite{HHT-qA-dim-meas, HS-lower-qA-dim-meas}.

These (quasi)-Assouad and Assouad-like dimensions aim to capture the local extreme behavior of a set or a measure, and as such they are not necessarily finite (or bounded below from zero). Indeed, it was observed in \cite{FH-upper-reg-dim} that a measure has finite uppper Assouad dimension if and only if it is \emph{doubling}. The notion of \emph{quasi-doubling} was introduced in \cite{HHT-qA-dim-meas} to characterize measures with finite upper quasi-Assouad dimension.

In this note, we study the quasi-Assouad dimensions of randomly selected measures supported on $[0,1]^d$. We begin in Section \ref{sec:star-char} by defining quasi-Assouad dimensions and reformulate them in a way similar to Furstenberg's definition of star dimension. In Section \ref{sec:distn-on-meas}, we construct a Borel probability distribution on $\P([0,1]^d)$, the space of all probability measures on $[0,1]^d$, regarded as a (compact) metric space with respect to the weak$^*$ topology. This distribution is obtained by representing measures in terms of labeled trees, and then choosing the ratios between each level in a uniform manner. Because of the uniformity in our construction, the expectation of the measure of any Borel set agrees with the $d$-dimensional Lebesgue measure. In Section \ref{sec:qA-dim-infinite}, we prove that under this distribution, almost all measures have infinite upper quasi-Assouad dimension. Similarly, but with a slightly different estimate, the lower quasi-Assouad dimension of a measure is almost surely zero. In Section \ref{sec:extensions}, we indicate how these results can be extended to intermediate Assouad-like dimensions. We end with a list of questions that we have not yet been able to answer.

We mention that the ``generic" behavior (in both the probabilistic and topological sense) of Assouad dimensions of attractors of random iterated function systems have been studied before in \cite{FMT-A-dim-random-fractals, FT-A-dim-random-carpets} as well as other papers. However, the literature on the probabilistic behavior of Assouad dimensions for measures is scarce. In view of our results, rather than studying the behavior of the Assouad dimensions of general probability measures, one might want to focus on measures that are more ``structured", such as those with some self-similarity.

\section{Quasi-Assouad dimension for measures and their star characterizations} \label{sec:star-char}

Throughout, $\P([0,1]^d)$ will denote the space of Borel probability measures supported on $[0,1]^d\subset \RR^d$.

For $\mu\in\P([0,1]^d),\delta>0$, let 
\[H(\mu, \delta)=\inf\left\{s: \exists C>0, \, \forall 0<r<R^{1+\delta}<R<1\, , \sup_{x\in \supp\mu} \frac{\mu(B(x,R))}{\mu(B(x,r))}\leq C\left(\frac{R}{r}\right)^s \right\},\]
and
\[h(\mu, \delta)=\sup\left\{s: \exists C>0, \, \forall 0<r<R^{1+\delta}<R<1\, , \inf_{x\in \supp\mu} \frac{\mu(B(x,R))}{\mu(B(x,r))}\geq C\left(\frac{R}{r}\right)^s\right\}.\]

Here the balls are understood to be with respect to the Euclidean metric on $\RR^d$. It makes no difference in the definition if we use any other equivalent metric. In particular, we are allowed to use the supremum metric, with respect to which all balls are ``hypercubes" in the Euclidean sense.

The \emph{upper quasi-Assouad dimension} of $\mu$ is defined to be
\[\dim_{qA} \mu = \lim_{\delta\rightarrow 0} H(\mu,\delta),\] 
and the \emph{lower quasi-Assouad dimension}
\[\dim_{qL} \mu = \lim_{\delta\rightarrow 0} h(\mu,\delta).\] 

Note that as $\delta\rightarrow 0$, $H(\mu,\delta)$ increases and $h(\mu,\delta)$ decreases. Thus the limits exist, and we may take limits along any countable sequence $\delta_n$ decreasing to zero.

The \emph{upper Assouad dimension} is $\dim_A \mu = H(\mu,0)$, and the \emph{lower Assouad dimension} is $\dim_L \mu = h(\mu,0)$. It is not hard to see $\dim_{L} \mu \leq \dim_{qL} \mu \leq \dim_{qA} \mu \leq \dim_A \mu$.

Following Furstenberg, we say a set $E'\subset [0,1]^d$ is a \emph{mini-set} of $E\subset [0,1]^d$ if there exists a scalar $\lambda \geq 1$ and $u\in \RR^d$ such that $E' = (\lambda E + u) \cap [0,1]^d$. Intuitively, $E'$ is the magnification of a certain part of $E$. Similarly, a \emph{mini-measure} of $\mu \in \P([0,1]^d)$ is a norm-one rescaling of $f^*\mu$, i.e. a measure of the form $\frac{1}{f^* \mu([0,1]^d)} f^* \mu|_{[0,1]^d}$. Here $f^*\mu$ denotes the push-forward of $\mu$ by some affine map on $\RR^d$ of the form $f(x)=\lambda x + u$, $\lambda\geq 1$, $u\in \RR^d$. A mini-measure of $\mu$ is supported on some mini-set of $\supp \mu$. 

The \emph{star dimension} of $E\subset [0,1]^d$ is
\[\dim^{*}E = \lim_{m\rightarrow \infty} \frac{\log_2 H_m^* (E)}{m},\] 
where $H_m^*(E)$ is the maximum, over all mini-sets $E'$ of $E$, number of grids of length $2^{-m}$ that intersect $E'$. Note that the limit exists by sub-multiplicativity of $(H_m^*(E))_m$. We refer the readers to \cite{FurstenbergExposition} for more details of these well-known definitions and results.

It was shown in \cite{CWY-A-star-dim} that $\dim^*E = \dim_{A} E$. In a similar way, one can formulate a ``star characterization" of $H(\mu, \delta)$. To do so, we ``discretize" the condition $1<r<R^{1+\delta}<R$ by considering various $m\in \NN$ for which $2^m \approx \frac{R}{r} > \frac{R}{R^{1+\delta}} = R^{-\delta}$. Thus the mini-measures we consider must be supported on a rescaling of hypercubes of side length $l=2R>2\cdot 2^{-m/\delta}$.

More precisely, we define the \emph{upper $\delta$-star dimension} of $\mu$ as 
\[\dim_{\delta}^{*}\mu = \limsup_{m\rightarrow \infty} \frac{\log_2 H_{m,\delta}^*(\mu)}{m}.\]
To obtain $H_{m,\delta}^*(\mu)$, for every mini-measure $\mu'$ of $\mu$, temporarily define $M_{m,\delta}(\mu')$ to be the maximum $\mu'$ measure of the $2^d$ central hypercubes in the partition of $[0,1]^d$ into $2^{md}$ hypercubes each of side length $2^{-m}$. Let $H_{m,\delta}^*(\mu):= \sup \frac{1}{M_{m,\delta}(\mu')}$, where the supremum is taken over all mini-measures $\mu'$ that arise from rescaling of a hypercube of side length $l> 2 \cdot 2^{-\frac{m}{\delta}}$, for which $M_{m,\delta}(\mu')\neq 0$. 

Unravelling all relevant definitions, we have

\begin{lemma}
$H(\mu, \delta)=\dim_{\delta}^{*} \mu.$
\begin{proof}
We first show $\dim_{\delta}^* \mu \leq H(\mu,\delta)$. Take a sequence $\alpha_n \searrow H(\mu,\delta)$ such that for each $n$, there exists $C_n>0$ such that
\[\sup_{x\in \supp \mu} \frac{\mu(B(x,R))}{\mu(B(x,r))} \leq C_n (\frac{R}{r})^{\alpha_n},\]
whenever $0<r<R^{1+\delta}<R<1$.

For each $m\in \NN$, $\epsilon>0$, $H_{m,\delta}^*(\mu) \leq \frac{1}{M_{m,\delta}(\mu')}+\epsilon = \frac{1}{\mu'(B)} +\epsilon$ for some mini-measure $\mu'$ of $\mu$, whose support is a rescaling of a hypercube of side length $l\geq 2\cdot 2^{-\frac{m}{\delta}}$. Here $B$ is one of the central $2^d$ hypercubes with the largest measure. Since this condition on $l$ is equivalent to $\frac{1}{2} l \cdot 2^{-m} < (\frac{1}{2}l)^{1+\delta}$, for some suitable $x,x'\in [0,1]^d$, we have
\[H_{m,\delta}^* (\mu) \leq \frac{1}{\mu'(B)} + \epsilon \leq \frac{1}{\frac{1}{2^d} \mu'(x,2^{-m-1})}+\epsilon = \frac{2^d \mu(B(x',l/2))}{\mu(B(x',l/2 \cdot 2^{-m}))} +\epsilon \leq 2^d C_n 2^{m\alpha_n} +\epsilon,\]
for each $n$ and every $\epsilon>0$.

It follows, by letting $\epsilon\rightarrow 0$, $m\rightarrow\infty$ and $n\rightarrow\infty$, that
\[\dim_{\delta}^* \mu \leq H(\mu,\delta).\]

To prove the other direction, fix $\epsilon > 0$. For simplicity of notation, let $s:=\dim^*_{\delta} \mu$. By the definition of the $\delta$-star dimension, for large enough $m$, we have $H_{m,\delta}^*(\mu) \leq 2^{m(s + \epsilon)}$. Thus there exists $C>0$ (depending only on $\epsilon$) such that for all $m\in \NN$, 
\[H_{m,\delta}^*(\mu) \leq C \cdot 2^{m(s + \epsilon)}.\]

Given $0<r<R^{1+\delta}<R<1$, choose $m$ such that $2^{m-1} \leq \frac{R}{r} < 2^m$. For all $x\in \supp \mu$,
\[\frac{\mu(B(x,R))}{\mu(B(x,r))} \leq \frac{\mu(B(x,R))}{\mu(B(x,2^{-m} R))} \leq  \frac{1}{M_{m,\delta}(\mu')} \leq H_{m,\delta}^*(\mu) \leq C \cdot 2^{m(s+\epsilon)} \leq C' (\frac{R}{r})^{s+\epsilon},\]
where $R\geq 2^{-\frac{m}{\delta}}$ is justified by $R^{-\delta}=\frac{R}{R^{1+\delta}} \leq \frac{R}{r}<2^m$.

Thus,
\[\sup_{x\in \supp \mu} \frac{\mu(B(x,R))}{\mu(B(x,r))} \leq C' \cdot \left(\frac{R}{r}\right)^{s + \epsilon}.\]
It follows that $H(\mu,\delta) \leq s + \epsilon$ for all $\epsilon>0$. Hence
\[H(\mu,\delta) \leq \dim_{\delta}^* \mu.\]
\end{proof}
\end{lemma}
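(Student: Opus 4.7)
The equality $H(\mu,\delta) = \dim^*_\delta \mu$ is a dyadic discretization result: the ``continuous'' quantifier $0 < r < R^{1+\delta} < R < 1$ in $H(\mu,\delta)$ corresponds to the ``discrete'' quantifier in $\dim^*_\delta \mu$ under the identifications $R = l/2$, $r = R \cdot 2^{-m}$, $R/r = 2^m$. I will prove the two inequalities separately, with the main translation in each direction being this dictionary between $R, r$ and $l, m$.

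For $\dim^*_\delta \mu \leq H(\mu,\delta)$, I will fix $s > H(\mu,\delta)$ and extract the constant $C > 0$ from the definition. Given $m \in \NN$ and an admissible mini-measure $\mu'$ obtained by rescaling a hypercube of side $l > 2 \cdot 2^{-m/\delta}$, the key geometric observation is that the union of the $2^d$ central sub-cubes of side $2^{-m}$ contains (in supremum metric) a ball of radius $2^{-m}$ about the center. Pulling this back through the defining affine map, one gets a $\mu$-ball of radius $r := (l/2) 2^{-m}$ inside a $\mu$-ball of radius $R := l/2$, with $l > 2 \cdot 2^{-m/\delta}$ translating exactly to $r < R^{1+\delta}$. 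Applying the $H(\mu,\delta)$-bound produces $1/M_{m,\delta}(\mu') \leq 2^d C \cdot 2^{ms}$ uniformly in $\mu'$. Taking logarithms, dividing by $m$, sending $m \to \infty$, and then $s \searrow H(\mu,\delta)$ yields the inequality.

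For $H(\mu,\delta) \leq \dim^*_\delta \mu$, set $s := \dim^*_\delta \mu$ and fix $\epsilon > 0$. The $\limsup$ definition gives a uniform bound $H^*_{m,\delta}(\mu) \leq C \cdot 2^{m(s+\epsilon)}$ for all $m$. Given $0 < r < R^{1+\delta} < R < 1$ and $x \in \supp \mu$, I choose $m$ with $2^{m-1} \leq R/r < 2^m$ and build a mini-measure $\mu'$ by rescaling a hypercube of side $l = 2R$ centered at $x$. One checks admissibility: $R^{-\delta} = R/R^{1+\delta} < R/r < 2^m$ gives $R > 2^{-m/\delta}$, equivalently $l > 2 \cdot 2^{-m/\delta}$. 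With this choice, $B(x,r)$ lies inside a central sub-cube of $\mu'$ at scale $2^{-m}$, so $\mu(B(x,R))/\mu(B(x,r)) \leq 1/M_{m,\delta}(\mu') \leq H^*_{m,\delta}(\mu) \leq C'(R/r)^{s+\epsilon}$, uniformly in $x$. Letting $\epsilon \to 0$ closes the argument.

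The main obstacle is not conceptual but bookkeeping: carefully verifying that the threshold $l > 2 \cdot 2^{-m/\delta}$ in the discrete definition matches the continuous condition $r < R^{1+\delta}$ under $R = l/2$, $r = R \cdot 2^{-m}$, so that each mini-measure encodes a legitimate pair $(R,r)$ and vice versa. The multiplicative constants ($2^d$ for the central sub-cubes, the constant $C$ from the defining inequality) are harmless, being absorbed by the $\log/m$ limit or by the arbitrary $\epsilon$.
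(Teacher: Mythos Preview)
Your proposal is correct and follows essentially the same route as the paper: both directions use the dictionary $R=l/2$, $r=R\cdot 2^{-m}$, verify that $l>2\cdot 2^{-m/\delta}$ is equivalent to $r<R^{1+\delta}$, and pass between $\mu(B(x,R))/\mu(B(x,r))$ and $1/M_{m,\delta}(\mu')$ via the $2^d$ central sub-cubes, with the stray constants absorbed by the $\limsup$. The only cosmetic difference is that the paper takes a sequence $\alpha_n\searrow H(\mu,\delta)$ where you fix a single $s>H(\mu,\delta)$; one small imprecision in your second direction is the phrase ``$B(x,r)$ lies inside a central sub-cube'' --- after rescaling, the ball about the centre need not sit in a \emph{single} level-$m$ grid cube but rather in the union of the $2^d$ central ones (the paper handles this by first shrinking $r$ to $2^{-m}R$), though this only costs another harmless bounded factor.
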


The analogous statement for the lower quasi-Assouad dimension also holds. We omit the proof, as it is similar.

\begin{lemma}
We have
\[h(\mu, \delta) = \liminf_{m\rightarrow\infty} \frac{\log_2 h_{m,\delta}^*(\mu)}{m},\]
where $h_{m,\delta}^*(\mu)$ is the minimum, over all mini-measures of $\mu'$ that are rescaling of some hypercubes of side length $l> 2 \cdot 2^{-\frac{m}{\delta}}$, of the reciprocal of the largest $\mu'$-measure of the central $2^d$ hypercubes from the partition of $\supp(\mu')$ into $2^{md}$ hypercubes (of side length $2^{-m}$).
\end{lemma}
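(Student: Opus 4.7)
The plan is to mirror the proof of the preceding lemma, with suprema and infima (and the directions of all inequalities) swapped, and with $\limsup$ replaced by $\liminf$. The key estimate underlying both directions is the same two-sided comparison between $M_{m,\delta}(\mu')$ for a mini-measure $\mu'$ (obtained by rescaling a hypercube centered at $x'$ of side $l > 2\cdot 2^{-m/\delta}$) and a measure ratio: setting $R = l/2$ and $r = l/2\cdot 2^{-m}$, one has
\[\frac{1}{2^d}\cdot\frac{\mu(B(x',r))}{\mu(B(x',R))} \;\le\; M_{m,\delta}(\mu') \;\le\; \frac{\mu(B(x',2r))}{\mu(B(x',R))},\]
and the hypothesis on $l$ ensures $r < R^{1+\delta}$ (and, up to shifting $m$ by a constant depending on $\delta$, also $2r < R^{1+\delta}$). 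The first inequality drove the previous lemma; the second will drive the present one.

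For the direction $h(\mu,\delta) \le \liminf_m \log_2 h_{m,\delta}^*(\mu)/m$, I fix $\beta < h(\mu,\delta)$ and choose $C > 0$ so that $\mu(B(x,R))/\mu(B(x,r)) \ge C(R/r)^\beta$ for every admissible $x,R,r$. Applying this through the right-hand inequality above to every valid mini-measure $\mu'$ yields $1/M_{m,\delta}(\mu') \ge C' \cdot 2^{m\beta}$ with $C'$ independent of $\mu'$ and $m$. Taking the infimum over $\mu'$ gives $h_{m,\delta}^*(\mu) \ge C' \cdot 2^{m\beta}$ for all large $m$, whence $\liminf_m \log_2 h_{m,\delta}^*(\mu)/m \ge \beta$; letting $\beta \nearrow h(\mu,\delta)$ closes this half.

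For the reverse direction, let $t := \liminf_m \log_2 h_{m,\delta}^*(\mu)/m$ and fix $s < t$. By definition of $\liminf$, $h_{m,\delta}^*(\mu) > 2^{ms}$ for all sufficiently large $m$, hence every admissible mini-measure satisfies $M_{m,\delta}(\mu') < 2^{-ms}$. Feeding this into the left-hand inequality yields $\mu(B(x',R))/\mu(B(x',r)) \ge 2^{-d}\cdot 2^{ms} = 2^{-d}(R/r)^s$ for $R = l/2$, $r = l/2\cdot 2^{-m}$, and any $x' \in [0,1]^d$. To convert this into the defining condition of $h(\mu,\delta)$, I take arbitrary admissible $r,R$ and $x \in \supp\mu$, choose $m$ with $R\cdot 2^{-m-1} < r \le R\cdot 2^{-m}$, and apply the above with $l = 2R$ and mini-measure centered at $x$; monotonicity of $\mu(B(x,\cdot))$ converts the bound at radius $R\cdot 2^{-m}$ into one at radius $r$ at the cost of a harmless factor. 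This gives $h(\mu,\delta) \ge s$ for every $s < t$, i.e.\ $h(\mu,\delta) \ge t$.

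The main technical nuisance I anticipate is exactly the one already encountered in the previous lemma: coordinating the three constraints $r < R^{1+\delta}$, $R/r \approx 2^m$, and $l = 2R > 2\cdot 2^{-m/\delta}$ when passing between arbitrary radii and the discretized parameter $m$. This is resolved by shifting $m$ by a constant depending only on $\delta$ and absorbing the resulting factors into $C$. Conceptually, the only subtlety specific to the lower dimension is the use of $\liminf$ rather than $\limsup$: the argument in the reverse direction requires $h_{m,\delta}^*(\mu) > 2^{ms}$ to hold for \emph{all} sufficiently large $m$, which is exactly what $\liminf$ provides and what forces this particular choice in the star characterization.
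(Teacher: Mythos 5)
The paper omits this proof, saying only that it is ``similar'' to the proof of the preceding lemma (the star characterization of $H(\mu,\delta)$), and your proposal carries out exactly that adaptation: you keep the same structure (a direction using a near-optimal mini-measure, a direction discretizing arbitrary $r<R^{1+\delta}$ by a dyadic $m$), swap suprema for infima and $\limsup$ for $\liminf$, and correctly isolate the two-sided sandwich $\frac{1}{2^d}\frac{\mu(B(x',r))}{\mu(B(x',R))} \le M_{m,\delta}(\mu') \le \frac{\mu(B(x',2r))}{\mu(B(x',R))}$, using the side that the first lemma did not need. The choice of which inequality drives which direction, and the remark that the lower dimension forces $\liminf$ (since the bound must hold for \emph{all} admissible mini-measures), are the genuine content here and you have them right.

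One small caution worth recording, since it is slightly sharper in this lemma than in its twin: the upper bound $M_{m,\delta}(\mu')\le\mu(B(x',2r))/\mu(B(x',R))$ lives at the scale pair $(R,2r)$ rather than $(R,r)$, whereas the admissibility condition $l>2\cdot 2^{-m/\delta}$ only guarantees $r<R^{1+\delta}$, not $2r<R^{1+\delta}$. For mini-measures whose $l$ is extremely close to the threshold $2\cdot 2^{-m/\delta}$, the pair $(R,2r)$ (or $(R-r,r)$, however one recentres) can fall just outside the range where the defining inequality of $h(\mu,\delta)$ applies. In the first lemma's ``easy'' direction this problem does not arise because there the \emph{lower} bound on $M_{m,\delta}(\mu')$ is what is used, and its scales are exactly $(R,r)$. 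You acknowledge this with ``shifting $m$ by a constant depending on $\delta$,'' which is the right spirit but does not literally cover the boundary mini-measures at a fixed $m$; the paper's own argument for Lemma 2.1 (``for some suitable $x,x'$'') is equally informal on this point, so your proposal is at the same level of rigor as the text, but a fully rigorous write-up should either restrict to a slightly larger side-length threshold and observe it does not change the $\liminf$, or split off the thin annulus of borderline $R$ separately.
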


\section{A Distribution on Probability measures}
\label{sec:distn-on-meas}
The goal of this Section is to construct a Borel probability distribution $\PP$ on $\P([0,1]^d)$, regarded as a (compact) metric space with the weak$^*$ topology. We first discuss the represention of $\mu\in \P([0,1]^d)$ in terms of a labeled $2^d$-ary tree, then construct a distribution on the space of labeled trees by choosing the ratios between each level in a uniform manner. Finally, we justify that it is a Borel probability distribution with respect to which measures behave like the Lebesgue measure ``on average".

\subsection{Tree representation of a probability measure} 
It is well-known that there is a 1-1 correspondence between closed subsets in $[0,1]^d$ and $2^d$-ary trees, and that measures on $[0,1]^d$ can be represented in terms of labeled $2^d$-ary trees, c.f.\cite{FurstenbergExposition}. For measures without full support, the corresponding tree is not necessarily complete (i.e. there might be some nodes with $<2^d$ children). However, we may complete it and label by zero all nodes that we add. Henceforth, when we speak of $n$-ary labeled trees, we mean a complete tree, possibly with some nodes (and of course, all their children) labeled zero.

We also label each edge of the tree by the ratio of the labels of the child node to the parent node.

For example, when $d=1$, the tree representation of $\mu\in \P([0,1])$ is

\begin{tikzpicture} 
[level distance=2cm,
level 1/.style={sibling distance=4cm},
level 2/.style={sibling distance=2cm}]

  \node {1} 
    child { 
      node {$\mu[0,\frac{1}{2})$} 
        child {
          node {$\mu[0,\frac{1}{4})$}
          edge from parent node[left] {$\frac{\mu[0,\frac{1}{4})}{\mu[0,\frac{1}{2})}$}        
        }
        child {
          node {$\mu[\frac{1}{4},\frac{1}{2})$}
          edge from parent node[right] {$\frac{\mu[\frac{1}{4},\frac{1}{2})}{\mu[\frac{1}{2},1]}$}
        }
      edge from parent node[left] {$\mu[0,\frac{1}{2})$} 
    } 
    child { 
      node {$\mu[\frac{1}{2},1]$} 
        child { 
          node {$\mu[\frac{1}{2},\frac{3}{4})$} 
          edge from parent node[left] {$\frac{\mu[\frac{1}{2},\frac{3}{4})}{\mu[\frac{1}{2},1]}$} 
        } 
        child {
          node {$\mu[\frac{3}{4},1]$}
          edge from parent node [right] {$\frac{\mu[\frac{3}{4},1]}{\mu[\frac{1}{2},1]}$}
          } 
      edge from parent node[right] {$\mu[\frac{1}{2},1]$} 
    }; 
\end{tikzpicture}

The label on each node is then the product of the labels on all edges of the (unique) path joining the root and this node. Note also that the sum of the labels of all edges below every node is 1. Thus, if we label all edges in a tree subject to the condition that all edge labels below each node sum to 1, then all labels on the nodes are uniquely determined, and correspond to a measure supported on $[0,1]^d$.

\subsection{Construction of the distribution}
Let $n\geq 2$ be an integer. In the following, we will construct a distribution on  labeled $n$-ary trees by choosing, independently for each node, $n$ random nonnegative numbers adding up to one, and arranging them to be the labels on the edges below this node.

Let $X_0=(X_1,...,X_n)$ be a random vector that takes values uniformly in the $(n-1)$-simplex $\left\{(x_1,...,x_n)\in \RR^n: x_1,...,x_n \geq 0, x_1+...+x_n=1\right\}$, i.e. its probability density is constant with respect to the $(n-1)$-dimensional Lebesgue measure\footnote{This is known in the literature as a special case of the \emph{Dirichlet distribution}.}. It is well-known (or follows from a simple calculation) that the marginal distribution of each $X_i$ is the beta distribution Beta(1,$n-1$). That is, $\PP_{X_i}(X_i\leq c) = 1-(1-c)^{n-1}$ for $c\in [0,1]$.

For convenience, we index the nodes of an $n$-ary tree by sequences with digits in $\Lambda = \{1,...,n\}$. Let $\Omega = \bigcup_n \Lambda^n$ be the semigroup generated by $\Lambda$, the elements of which will represent nodes of our $n$-ary trees. Thus every labelled tree can be identified as a point in $[0,1]^{\Omega}$. 

By a slight abuse of notation, instead of ``a node of the labeled tree $T$ represented by $\omega$", we will simply say ``the node $\omega$". Sometimes, we will also denote the label of the node $\omega$ in $T$ by $m_T(\omega)$.

Next, we make a countable collection of independent copies of $X_0$, and attach to each node $\omega\in \Omega$ such an independent copy $X^{\omega}$. The $n$ edges below a node $\omega$ will be attached the corresponding components of $X^\omega = (X^\omega_1,...,X^\omega_n)$. As remarked above, the law of $X:=(X^{\omega})_{\omega\in \Omega}$ determines a distribution on the space of labeled $n$-ary trees, which we shall denote by $\PP$. The underlying $\sigma$-algebra will be denoted by $\sigma(X)$. 

Since $\P([0,1]^d)$ can be identified with the space of labeled $2^d$-ary trees, applying the construction to $n=2^d$, we obtain a distribution on $\P([0,1]^d)$. By abuse of notation, we still denote it by $\PP$, and its underlying $\sigma$-algebra $\sigma(X)$.

We illustrate the situation with $d=1$. In this case every $\mu\in \P([0,1])$ can be represented by a labeled binary tree, where each $X_{\omega}$ (with $\omega$ an 0-1 sequence) has distribution Beta$(1,1)$, or equivalently, Uniform$(0,1)$:

\begin{tikzpicture} 
[level distance=1.5cm,
level 1/.style={sibling distance=6cm},
level 2/.style={sibling distance=3cm},
level 3/.style={sibling distance=1.5cm}]

\coordinate
  child { 
    child {
      child{edge from parent node[left] {$X_{00}$}}
      child{edge from parent node[right] {$1-X_{00}$}}
      edge from parent node[left] {$X_{0}$}        
        }
    child {
      child{edge from parent node[left] {$X_{01}$}}
      child{edge from parent node[right] {$1-X_{01}$}}
      edge from parent node[right] {$1-X_{0}$}
        }
    edge from parent node[left] {$X_{\emptyset}$} 
  } 
  child { 
    child {
      child{edge from parent node[left] {$X_{10}$}}
      child{edge from parent node[right] {$1-X_{10}$}} 
      edge from parent node[left] {$X_1$} 
      } 
    child {
      child{edge from parent node[left] {$X_{11}$}}
      child{edge from parent node[right] {$1-X_{11}$}}
      edge from parent node [right] {$1-X_1$}
      } 
      edge from parent node[right] {$1-X_{\emptyset}$} 
  }; 
\end{tikzpicture}

Enumerating elements of $\Omega$ in the order $\emptyset, 0,1,00,01,10,11,...$, we have a random variable whose value specifies all edge labels
\[X = ((X_{\emptyset},1-X_{\emptyset}),(X_0,1-X_0),(X_1,1-X_1),(X_{00},1-X_{00}),...).\]

This uniquely determine all node labels, namely, the value of 
\[Y = (1, X_{\emptyset}, 1 - X_{\emptyset}, X_{\emptyset}X_0,X_{\emptyset}(1-X_0),(1-X_{\emptyset})X_1, (1-X_{\emptyset})(1-X_1), X_{\emptyset}X_0 X_{00}, ...).\]

Hence, they induce a distribution on the space of labeled binary trees, and on $\P([0,1])$, via the push-forward.

\subsection{Basic properties of the distribution}
\begin{lemma}
The distribution $\PP$ on $\P([0,1]^d)$ constructed above is a Borel probability measure.
\begin{proof}
It is clear that $\PP$ is a probability measure, being the push-forward of some probability measure on the sample space of $X$. It remains to show all weak$^*$ neighbourhoods of $\P([0,1]^d)$ are $\PP$-measurable.

It follows, by approximating all continuous functions on $[0,1]^d$ by linear combinations of characteristic functions on $2$-adic hypercubes, that the push-forward of the weak$^*$ topology on $\P([0,1]^d)$ corresponds to the topology defined by pointwise convergence of the labels on every node of the labeled trees. A subbase of this topology consists of elements of the form
\[\N(T_0, \omega,\epsilon) = \left\{T: |m_T(\omega)-m_{T_0}(\omega)|<\epsilon\right\},\]
for some fixed tree $T_0$, finite sequence $\omega\in \Omega$ and $\epsilon>0$. Recall that the notation $m_T(\omega)$ stands for the label of $\omega$ on $T$. Since $m_T(\omega)$ (regarded as a function on $T$) is given by the product of some $X^\omega_i$, it is measurable. It follows that $\N(T_0, \omega,\epsilon)$ is measurable and $\PP$ is a Borel measure.
\end{proof}
\end{lemma}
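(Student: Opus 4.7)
The plan is to verify two things: first, that $\PP$ has total mass one, and second, that every weak$^*$-Borel subset of $\P([0,1]^d)$ lies in the push-forward $\sigma$-algebra. The first is immediate since $\PP$ is defined as the push-forward of a genuine probability distribution (a countable product of Dirichlet distributions) on the sample space of $X = (X^\omega)_{\omega \in \Omega}$, and push-forwards of probability measures are probability measures. All the work is in the second point.

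To handle measurability, I would first re-express the weak$^*$ topology on $\P([0,1]^d)$ in tree terms. The idea is that under the correspondence $\mu \leftrightarrow T_\mu$, where the node $\omega$ carries the $\mu$-mass of the corresponding dyadic hypercube $Q_\omega$, weak$^*$ convergence of $\mu_k \to \mu$ is equivalent to $\mu_k(Q_\omega) \to \mu(Q_\omega)$ for every $\omega$. The nontrivial direction uses that continuous functions on $[0,1]^d$ can be uniformly approximated by linear combinations of indicator functions of dyadic hypercubes, modulo a boundary issue that is harmless because we may replace half-open cubes by closed ones on a $\mu$-null boundary, or alternatively restrict to those $\omega$ for which $\mu(\partial Q_\omega) = 0$ and use portmanteau. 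Hence the weak$^*$ topology is identified with the product topology on labeled trees, which has as a subbase the sets $\N(T_0,\omega,\epsilon) = \{T : |m_T(\omega) - m_{T_0}(\omega)| < \epsilon\}$.

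Next, I would show each such subbasic set is in $\sigma(X)$. By construction, if $\omega = \omega_1 \omega_2 \cdots \omega_k$, then
\[
m_T(\omega) = X^{\emptyset}_{\omega_1} \cdot X^{\omega_1}_{\omega_2} \cdot X^{\omega_1 \omega_2}_{\omega_3} \cdots X^{\omega_1 \cdots \omega_{k-1}}_{\omega_k},
\]
a finite product of coordinates of independent random vectors, hence a polynomial (in particular continuous, hence Borel) function of $X$. Therefore $\N(T_0,\omega,\epsilon)$ pulls back to a measurable set in the sample space of $X$, so lies in $\sigma(X)$. Since $\P([0,1]^d)$ is second countable (the weak$^*$ topology on a compact metrizable space is metrizable and separable), the Borel $\sigma$-algebra is generated by a countable subcollection of these subbasic sets, so the entire Borel $\sigma$-algebra is contained in $\sigma(X)$, and $\PP$ restricts to a Borel probability measure.

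The main obstacle, and the only step that requires more than bookkeeping, is the identification of the weak$^*$ topology with the product topology on the labels. The approximation of continuous test functions by dyadic-cube indicators is routine, but one must handle the half-open versus closed cube distinction carefully so that convergence of labels really does imply $\int f\, d\mu_k \to \int f\, d\mu$ for every continuous $f$; this is the only place where a little care is needed. Everything else is a formal verification.
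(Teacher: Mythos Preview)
Your proposal is correct and follows essentially the same route as the paper: reduce to showing the subbasic weak$^*$ neighbourhoods $\N(T_0,\omega,\epsilon)$ are measurable by identifying the weak$^*$ topology with pointwise convergence of node labels (via approximation of continuous functions by dyadic-cube indicators), and then observe that $m_T(\omega)$ is a finite product of coordinates of the $X^\omega$ and hence measurable. You are somewhat more explicit than the paper in writing out the product formula for $m_T(\omega)$, invoking second countability to pass from the subbase to the full Borel $\sigma$-algebra, and flagging the half-open versus closed cube boundary issue, but these are elaborations of the same argument rather than a different approach.
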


Since our random measures are selected ``uniformly" at each level, we should expect them to behave probabilistically like the Lebesgue measure

\begin{lemma}
Let $E\subset [0,1]^d$ be a Borel set. Then the expectation satisfies
\[\EE[\mu(E)] = m(E),\]
where $m$ denotes the $d$-dimensional Lebesgue measure.

\begin{proof}
First, suppose $E$ is a 2-adic hypercube with side length $2^{-m}$. Then $\mu(E)$ is, by construction, the product of $m$ independent random variables with expectation $2^{-d}$. (Recall that they have the beta distribution with parameters 1 and $2^d-1$.) Thus $\mu(E)$ has expectation $2^{-md}$, the $d$-dimensional Lebesgue measure of $E$.

In the general case, we may approximate $E$ by linear combinations of 2-adic hypercubes. The result then follows from the linearity of expectation and continuity of measures.
\end{proof}

\end{lemma}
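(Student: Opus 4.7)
The plan is to reduce the lemma to the case of $2$-adic hypercubes and then use a standard measure-theoretic extension.

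First, I would verify the claim for a $2$-adic hypercube $E \subset [0,1]^d$ of side length $2^{-m}$. Such a hypercube corresponds to a unique node $\omega = \omega_1 \omega_2 \cdots \omega_m \in \Lambda^m$ of depth $m$ in the $2^d$-ary tree representing $\mu$, and by construction
\[
\mu(E) \;=\; m_T(\omega) \;=\; X^{\emptyset}_{\omega_1} \cdot X^{\omega_1}_{\omega_2} \cdot X^{\omega_1\omega_2}_{\omega_3} \cdots X^{\omega_1\cdots\omega_{m-1}}_{\omega_m}.
\]
Each factor is a component of a distinct random vector $X^{\omega'}$, and since the $X^{\omega'}$ were chosen to be independent across $\omega' \in \Omega$, these $m$ factors are mutually independent. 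The marginal of each component is Beta$(1, 2^d-1)$, which has mean $1/2^d$. Hence
\[
\EE[\mu(E)] \;=\; \prod_{j=1}^{m} \EE\bigl[X^{\omega_1\cdots\omega_{j-1}}_{\omega_j}\bigr] \;=\; (2^{-d})^m \;=\; 2^{-md} \;=\; m(E).
\]

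Next, by linearity of expectation, the equality extends to any finite disjoint union of $2$-adic hypercubes. To pass to arbitrary Borel sets, I would invoke Dynkin's $\pi$-$\lambda$ theorem. Let
\[
\mathcal D \;=\; \{E \subset [0,1]^d \text{ Borel} : \EE[\mu(E)] = m(E)\}.
\]
Since $\mu$ and $m$ are both (random and deterministic, respectively) probability measures on $[0,1]^d$, one checks that $\mathcal D$ is a $\lambda$-system: it contains $[0,1]^d$; it is closed under proper set differences because $\mu(A \setminus B) = \mu(A) - \mu(B)$ and likewise for $m$ whenever $B \subset A$; and it is closed under increasing countable unions by monotone convergence applied under expectation (the integrands $\mu(E_k)$ are bounded by $1$, so dominated or monotone convergence applies on both sides).

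Finally, the collection of $2$-adic hypercubes, together with $\emptyset$, forms a $\pi$-system (any two $2$-adic hypercubes are either disjoint or nested), and it generates the Borel $\sigma$-algebra on $[0,1]^d$. By Dynkin's theorem, $\mathcal D$ contains all Borel sets, completing the proof. The only mildly delicate point is the justification that expectation commutes with the countable limits used to verify the $\lambda$-system axioms, but this is immediate from the boundedness $0 \le \mu(E_k) \le 1$.
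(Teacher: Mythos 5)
Your proof is correct and takes essentially the same approach as the paper: compute the expectation for a single $2$-adic hypercube via independence of the $m$ Beta$(1,2^d-1)$ edge labels along its path, then extend to all Borel sets. The only cosmetic difference is that you invoke Dynkin's $\pi$-$\lambda$ theorem to make the extension step fully rigorous, whereas the paper states the same extension more informally as approximation by linear combinations of $2$-adic hypercubes together with continuity of measures.
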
 

\section{Almost-sure Behavior of quasi-Assouad Dimensions}
\label{sec:qA-dim-infinite}

\subsection{Non-Finiteness of the upper quasi-Assouad dimension} \label{sec:finiteness-qA-dim}
The goal of this subsection is to show
\[\dim_{qA} \mu = \lim_{\delta\rightarrow 0} H(\mu,\delta)=\lim_{\delta\rightarrow 0} \limsup_{m\rightarrow\infty} \frac{\log_2 H^*_{m,\delta}(\mu)}{m}\]
is infinite, for $\PP$-a.e. $\mu$.

To do this, we introduce a new quantity, $H_{m,\delta}(\mu)$, closely related to $H_{m,\delta}^*(\mu)$, which works more naturally with the tree representation. Fix $m,\delta$. Each node at level $\floor{\frac{m}{\delta}}$ has $2^m$ children $m$ levels down, $2^d$ of which correspond to the central $2^d$ hypercubes in the partition of $[0,1]^d$ into $2^{\floor{m/\delta}+m}$ hypercubes. We refer to these as \emph{central children}. Write $|\omega|$ for the length of the (finite) sequence $\omega$. Define 
\[H_{m,\delta}(\mu) = \max_{|\omega| = \floor{\frac{m}{\delta}}} \min_{\substack{\omega' \text{ a central} \\ \text{child of } \omega}} \frac{m_T(\omega)}{m_T(\omega')}.\]

Continuing with our example in the case $d=1$,
\[H_{2,2}(\mu) = \max\left\{\min\left\{X_0(1-X_{00}),(1-X_0)X_{01}\right\}, \min\left\{X_1(1-X_{10}),(1-X_1)X_{11}\right\}\right\}.\]

Since this amounts to taking the supremum over a smaller collection of mini-measures, $H_{m,\delta}(\mu) \leq H_{m,\delta}^*(\mu)$. Thus it suffices to show for $\PP$-a.e. $\mu$,
\[H(\mu):=\lim_{\delta\rightarrow 0} \limsup_{m\rightarrow\infty} \frac{\log_2 H_{m,\delta}(\mu)}{m} = \infty.\]

We begin by showing the set in question belongs to the tail $\sigma$-algebra
\[\bigcap_{N\in \NN} \sigma(X^{\omega}: |\omega| \geq N),\]
Here, since the image of each $X^{\omega}$ embeds in the space of trees, by abuse of notation we may think of the $\sigma$-algebra $\sigma(X^{\omega})$ generated by $X^{\omega}$ as a subset of $\sigma(X)$.

\begin{lemma} \label{lemma:tail-event}
$H(\mu) = \infty$ is a tail event. That is,
\[\left\{\mu: \lim_{\delta\rightarrow 0} \limsup_{m\rightarrow\infty} \frac{\log_2 H_{m,\delta}(\mu)}{m}<\infty\right\} \in \bigcap_{N\in \NN} \sigma(X^{\omega}: |\omega| \geq N).\]
Thus, by Kolmogorov's 0-1 law, this set has $\PP$-measure 0 or 1.
\end{lemma}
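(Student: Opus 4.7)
The plan is to show that the random variable $H(\mu):=\lim_{\delta\to 0}\limsup_{m\to\infty}\frac{\log_2 H_{m,\delta}(\mu)}{m}$ is measurable with respect to the tail $\sigma$-algebra $\bigcap_{N\in\NN}\sigma(X^{\omega}:|\omega|\geq N)$. Since the random vectors $\{X^{\omega}\}_{\omega\in\Omega}$ are mutually independent by construction, Kolmogorov's $0$-$1$ law will then immediately yield that the event $\{H(\mu)<\infty\}$ has $\PP$-measure either $0$ or $1$.

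The crux is a cancellation observation about $H_{m,\delta}(\mu)$. Write $k:=\floor{m/\delta}$. For a node $\omega$ at level $k$ and a central child $\omega'$ at level $k+m$, both $m_T(\omega)$ and $m_T(\omega')$ are products of edge labels running from the root down; the root-to-$\omega$ factors appear in both, so their quotient $m_T(\omega)/m_T(\omega')$ equals the reciprocal of the product of edge labels on the unique path from $\omega$ to $\omega'$. Each such edge $\eta\to\eta i$ (with $k\leq|\eta|<k+m$) carries the label $X^{\eta}_i$, and therefore
\[H_{m,\delta}(\mu)\in\sigma\bigl(X^{\eta}:k\leq|\eta|<k+m\bigr)\subset\sigma\bigl(X^{\eta}:|\eta|\geq \floor{m/\delta}\bigr).\]

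Given this, fix $N\in\NN$ and $\delta>0$. As soon as $m\geq N\delta$ we have $\floor{m/\delta}\geq N$, so discarding finitely many initial terms yields $\limsup_{m\to\infty}\frac{\log_2 H_{m,\delta}(\mu)}{m}\in\sigma(X^{\eta}:|\eta|\geq N)$. Evaluating the outer limit along the countable sequence $\delta_k=1/k$ preserves measurability, so $H(\mu)\in\sigma(X^{\eta}:|\eta|\geq N)$ for every $N$. Hence $H(\mu)$ is tail-measurable and Kolmogorov's $0$-$1$ law concludes. The only real obstacle is the cancellation step: one must recognize that although the individual node labels $m_T(\omega)$ and $m_T(\omega')$ depend on edges all the way back to the root, their ratio depends only on edges strictly below level $\floor{m/\delta}$, so the ``shallow'' independent factors drop out. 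Everything after that is routine tail-argument bookkeeping.
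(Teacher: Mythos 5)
Your proposal is correct and takes essentially the same approach as the paper: show $H_{m,\delta}$ depends only on $X^\eta$ with $|\eta|\geq\floor{m/\delta}$, discard finitely many initial $m$'s to land in $\sigma(X^\eta:|\eta|\geq N)$ for each $N$, pass to a countable sequence $\delta_k\downarrow 0$, and invoke Kolmogorov. The one thing you do more carefully than the paper is spell out the cancellation $m_T(\omega)/m_T(\omega')=\prod(\text{edge labels on the path from }\omega\text{ to }\omega')^{-1}$, which the paper compresses into the phrase ``by definition $H_{m,\delta}(\mu)$ depends only on $X^\omega$ with $|\omega|\geq\floor{m/\delta}$'' --- your version makes the key observation explicit.
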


\begin{proof}
We claim that for each $\delta>0$,
\[\left\{\mu: \limsup_{m\rightarrow\infty} \frac{\log_2 H_{m,\delta}(\mu)}{m}<\infty\right\}\]
is a tail event. Indeed, by definition $H_{m,\delta}(\mu)$ depends only on $X^{\omega}$ with $|\omega|\geq \floor{\frac{m}{\delta}}$. Thus for each $N$, let $M$ be so large such that $\floor{\frac{M}{\delta}}\geq N$, then
\begin{align*}
\left\{\mu: \limsup_{m\rightarrow\infty} \frac{\log_2 H_{m,\delta}(\mu)}{m}<\infty\right\} 
&= \left\{\limsup_{m\rightarrow\infty, \, m\geq M} \frac{\log_2 H_{m,\delta}(\mu)}{m}<\infty\right\} \\
&\in \sigma(X^{\omega}: |\omega|\geq N).
\end{align*}

Since $\limsup_{m\rightarrow\infty} \frac{\log_2 H_{m,\delta}(\mu)}{m}$ increases as $\delta$ decreases to zero, we may take limits along a countable subsequence (which preserves measurability) to see
\[\left\{\mu: \lim_{\delta\rightarrow 0} \limsup_{m\rightarrow\infty} \frac{\log_2 H_{m,\delta}(\mu)}{m}<\infty\right\}\]
is also a tail event.
\end{proof}

\begin{proposition} \label{prop:delta-dim-is-small-with-small-prob}
For all $N\in \NN$, there exists $\delta>0$ such that
\[\PP\left\{\mu: \limsup_{m\rightarrow\infty} \frac{\log_2 H_{m,\delta}(\mu)}{m}\leq N\right\} \leq e^{-1}.\]

\begin{proof}
We first note that $$\left\{\mu: \limsup_{m\rightarrow\infty} \frac{\log_2 H_{m,\delta}(\mu)}{m}\leq N\right\}$$ is $\PP$-measurable as $\limsup_{m\rightarrow\infty} \frac{\log_2 H_{m,\delta}(\mu)}{m}$ can be obtained by taking limit, maximum, minimum, reciprocal and products of $X^{\omega}_i$, and all these operations preserve measurability.

For $N\in \NN$, we show that any $\delta< 1/(N 2^d (2^d-1))$ works. For every $m\in \NN$, we have that $\frac{\log_2 H_{m,\delta}(\mu)}{m}\leq N$ if and only if 
$$H_{m,\delta}(\mu) = \max_{|\omega| = \floor{\frac{m}{\delta}}} \min_{\substack{\omega'\text{ a central} \\ \text{child of }\omega}} \frac{m_T(\omega)}{m_T(\omega')} \leq 2^{Nm},$$
if and only if all $\min_{\omega'}\frac{m_T(\omega)}{m_T(\omega')}\leq 2^{Nm}$. Recall that $\frac{m_T(\omega')}{m_T(\omega)}$ is given by the product of $m$ independent random variables with distribution Beta(1,$2^d-1$). Among such products, those starting with distinct $\omega$ are independent (there are at least $2^{\floor{\frac{m}{\delta}}}$ of them). For $m\geq 2$, we have
\begin{align*}
\PP\left\{\mu: \frac{\log_2 H_{m,\delta}(\mu)}{m} \leq N\right\}
&= \PP\left\{\mu: H_{m,\delta}(\mu) \leq 2^{Nm}\right\}\\
&= \PP\left\{T: \max_{|\omega| = \floor{\frac{m}{\delta}}} \min_{\substack{\omega'\text{ a central} \\ \text{child of }\omega}} \frac{m_T(\omega)}{m_T(\omega')} \leq 2^{Nm}\right\} \\
&\leq \PP\left(\bigcap_{|\omega|=\floor{\frac{m}{\delta}}} \left\{ \max_{\omega'} X^{\omega}_{i_1} ... X^{\omega'}_{i_m} \geq 2^{-Nm}\right\} \right)\\
&\leq \PP\left(\bigcap_{|\omega|=\floor{\frac{m}{\delta}}} \left\{ \max_{\omega'} X^{\omega'}_{i_m} \geq 2^{-Nm}\right\}\right) \\
&= \left(1 - \PP\left\{ \max_{\substack{\omega' \text{ a central } \\ \text{child of some (fixed) } \omega_0}} X^{\omega'}_{i_m} < 2^{-Nm}\right\}\right)^{2^{\floor{\frac{m}{\delta}}}} \\
&= \left(1 - \left(\PP_Y\left\{Y < 2^{-Nm}\right\}\right)^{2^d}\right)^{2^{\floor{\frac{m}{\delta}}}},
\end{align*}

\noindent where $Y$ has distribution Beta$(1,2^d-1)$. The last equality is justified by the fact that each $\omega$ has $2^d$ central children, and for $m\geq 2$, the last terms in the corresponding products are independent.

Since $\delta$ has been chosen so that $\big(\PP_Y\left\{Y<2^{-Nm}\right\}\big)^{2^d} = 2^{-Nm(2^d-1)2^d}> 2^{-\floor{\frac{m}{\delta}}}$, using the fact that $\limsup_m (1-\frac{1}{a_m})^{b_m} \leq e^{-1}$ if $b_m\geq a_m$ eventually, we see that
\[\PP\left\{\mu: \limsup_{m\rightarrow\infty} \frac{\log_2 H_{m,\delta}(\mu)}{m}\leq N\right\} \leq \limsup_{m\rightarrow\infty} \PP\left\{\mu: \frac{\log_2 H_{m,\delta}(\mu)}{m} \leq N\right\} \leq  e^{-1},\]
as required.
\end{proof}
\end{proposition}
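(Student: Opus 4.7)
The plan is to fix $N$, upper bound the single-scale probability $\PP\{H_{m,\delta}(\mu) \leq 2^{Nm}\}$ for each sufficiently large $m$, and then pass to the $\limsup$ event via Fatou's lemma. Measurability of the events involved is not a real issue, since $H_{m,\delta}$ is a finite composition of products, maxima, minima and reciprocals of the independent Dirichlet vectors $X^{\omega}$, and limits of measurable quantities remain measurable.

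The central reduction is the following. The event $H_{m,\delta}(\mu) \leq 2^{Nm}$ asserts that for every node $\omega$ of length $\lfloor m/\delta \rfloor$, some central child $\omega'$ (at level $\lfloor m/\delta\rfloor + m$) satisfies $m_T(\omega')/m_T(\omega) \geq 2^{-Nm}$. This ratio equals a product of $m$ edge labels, each in $[0,1]$, so the product being at least $2^{-Nm}$ forces the last factor, namely the edge label from the penultimate ancestor of $\omega'$, to be at least $2^{-Nm}$ as well. This gives the containment
\[
\{H_{m,\delta}(\mu) \leq 2^{Nm}\} \subseteq \bigcap_{|\omega| = \lfloor m/\delta \rfloor} \bigl\{\text{some central child of } \omega \text{ has last edge label} \geq 2^{-Nm}\bigr\}.
\]

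Next I would harvest independence on two levels. Distinct $\omega$ at level $\lfloor m/\delta\rfloor$ govern disjoint subtrees, yielding at least $2^{\lfloor m/\delta\rfloor}$ independent copies of the inner event. For a single $\omega$ with $m \geq 2$, the $2^d$ central descendants of $\omega$ have \emph{distinct} penultimate ancestors: in each coordinate the two central indices $\{2^{m-1}-1,\, 2^{m-1}\}$ map injectively under $c \mapsto \lfloor c/2 \rfloor$ (this fails at $m=1$), so the $2^d$ last-edge labels are independent $\mathrm{Beta}(1, 2^d-1)$ variates. This yields
\[
\PP\{H_{m,\delta}(\mu) \leq 2^{Nm}\} \leq \Bigl(1 - \bigl(\PP\{Y < 2^{-Nm}\}\bigr)^{2^d}\Bigr)^{2^{\lfloor m/\delta\rfloor}},
\]
with $Y \sim \mathrm{Beta}(1, 2^d-1)$, whose CDF $1 - (1-c)^{2^d-1}$ is of order $c$ for small $c$.

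It then suffices to choose $\delta$ small enough that the exponent $2^{\lfloor m/\delta\rfloor}$ dominates the reciprocal of the inner probability, so that the right-hand side is eventually at most $e^{-1}$ via the elementary inequality $(1 - 1/a_m)^{b_m} \leq e^{-1}$ whenever $b_m \geq a_m$; a threshold such as $\delta < 1/(N \cdot 2^d(2^d-1))$ leaves ample margin. Finally, the inclusion $\{\limsup_m \log_2 H_{m,\delta}(\mu)/m \leq N\} \subseteq \liminf_m \{H_{m,\delta}(\mu) \leq 2^{(N+\epsilon)m}\}$ combined with Fatou's lemma upgrades the per-$m$ estimate to the claimed bound on the limsup event, with $\delta$ adjusted slightly to absorb the $\epsilon$. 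The only genuinely delicate step is the independence of the $2^d$ last-edge labels: without noticing that for $m \geq 2$ the central children have distinct parents (rather than being competing children of a common parent, whose Dirichlet labels are not independent and sum to one), one loses a full factor of $2^d$ in the relevant exponent, and no choice of $\delta$ can then close the estimate.
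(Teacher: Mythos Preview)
Your proposal is correct and follows essentially the same route as the paper: reduce $\{H_{m,\delta}\le 2^{Nm}\}$ to a last-edge condition, exploit independence both across the $2^{\lfloor m/\delta\rfloor}$ nodes $\omega$ and (for $m\ge2$) across the $2^d$ central children, compute the Beta CDF, and choose $\delta<1/(N\,2^d(2^d-1))$ so the $(1-1/a_m)^{b_m}\le e^{-1}$ inequality applies. Your explicit justification that the central children have distinct penultimate ancestors, and your use of an $\epsilon$-slack with Fatou for the final passage to the $\limsup$ event, are in fact slightly more careful than the paper's write-up (which passes directly from $\{\limsup_m f_m\le N\}$ to $\limsup_m\PP\{f_m\le N\}$ without the $\epsilon$), but the argument is the same.
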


\begin{theorem} \label{thm:qA-dim-infinite-a-e}
Let $\PP$ be the distribution on $\P([0,1]^d)$ constructed in Section \ref{sec:distn-on-meas}. Then
\[\PP\left\{\lim_{\delta\rightarrow 0} \limsup_{m\rightarrow\infty} \frac{\log_2 H_{m,\delta}(\mu)}{m} <\infty\right\}=0.\]
In particular, the quasi-Assouad dimension is infinite for $\PP$-a.e. $\mu$.

\begin{proof}
By Proposition \ref{prop:delta-dim-is-small-with-small-prob}, for every $N$ there exists $\delta\geq 0$ such that
\[\PP\left\{\mu: \limsup_{m\rightarrow \infty} \frac{\log_2 H_{m,\delta}(\mu)}{m} \leq N\right\}\leq e^{-1}.\]

Since $\limsup_{m\rightarrow\infty} \frac{\log_2 H_{m,\delta}(\mu)}{m}$ increases as $\delta\searrow 0$, by monotonicity,
\[\PP\left\{\mu: \lim_{\delta\rightarrow 0} \limsup_{m\rightarrow\infty} \frac{\log_2 H_{m,\delta}(\mu)}{m} \leq N\right\}\leq e^{-1}<1.\]

By continuity of $\PP$,
\[\PP\left\{\mu: \lim_{\delta\rightarrow 0} \limsup_{m\rightarrow\infty} \frac{\log_2 H_{m,\delta}(\mu)}{m} <\infty\right\}\leq e^{-1}<1.\]

We saw in Lemma \ref{lemma:tail-event} that $\PP\left\{\mu: \lim_{\delta\rightarrow 0} \limsup_{m\rightarrow\infty} \frac{\log_2 H_{m,\delta}(\mu)}{m} <\infty\right\} = 0$ or 1, it must be the case that
\[\PP\left\{\mu: \lim_{\delta\rightarrow 0} \limsup_{m\rightarrow\infty} \frac{\log_2 H_{m,\delta}(\mu)}{m} <\infty\right\}=0.\]
\end{proof}

\end{theorem}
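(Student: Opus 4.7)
The plan is to bootstrap the two technical results already established: Proposition \ref{prop:delta-dim-is-small-with-small-prob}, which bounds the probability that the $\delta$-star exponent stays below a fixed threshold $N$, and Lemma \ref{lemma:tail-event}, which identifies the event $\{H(\mu) < \infty\}$ as a tail event. The combination of a uniform upper bound strictly below $1$ with Kolmogorov's 0--1 law will force the target probability to be exactly $0$.

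First I would fix an arbitrary $N \in \NN$ and invoke Proposition \ref{prop:delta-dim-is-small-with-small-prob} to select some $\delta_N > 0$ such that
\[\PP\left\{\mu : \limsup_{m\to\infty} \frac{\log_2 H_{m,\delta_N}(\mu)}{m} \leq N\right\} \leq e^{-1}.\]
Since $\limsup_{m\to\infty} \frac{\log_2 H_{m,\delta}(\mu)}{m}$ is monotone non-decreasing as $\delta \searrow 0$, the event with $\delta_N$ replaced by any smaller $\delta$ is contained in the event above; taking limits along a countable sequence $\delta \searrow 0$ therefore yields
\[\PP\left\{\mu : \lim_{\delta \to 0}\limsup_{m\to\infty} \frac{\log_2 H_{m,\delta}(\mu)}{m} \leq N\right\} \leq e^{-1}.\]

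Next I would apply continuity of $\PP$ from below to the increasing union of the events $\{H(\mu) \leq N\}$ over $N \in \NN$, which exhausts $\{H(\mu) < \infty\}$. This gives
\[\PP\{\mu : H(\mu) < \infty\} = \lim_{N \to \infty} \PP\{\mu : H(\mu) \leq N\} \leq e^{-1} < 1.\]
Finally, by Lemma \ref{lemma:tail-event} the event $\{H(\mu) < \infty\}$ lies in the tail $\sigma$-algebra of the independent family $(X^\omega)_{\omega \in \Omega}$, so Kolmogorov's 0--1 law forces its probability to be either $0$ or $1$; since we have just ruled out $1$, it must equal $0$. The conclusion about $\dim_{qA}\mu$ then follows from the inequality $H_{m,\delta}(\mu) \leq H^*_{m,\delta}(\mu)$ and the star characterization (Lemma~1.1) established earlier.

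No step here is genuinely delicate, since the two main inputs have already been proved; the only point to be careful about is to verify the monotonicity of $\limsup_m \frac{\log_2 H_{m,\delta}(\mu)}{m}$ in $\delta$ (which is immediate from the monotonicity built into the definition of $H_{m,\delta}$) so that restricting to a countable sequence $\delta_n \searrow 0$ preserves both measurability and the inequality. The most substantive conceptual content of the theorem is packed into Proposition \ref{prop:delta-dim-is-small-with-small-prob}, so the argument here is essentially a tidy assembly of tail-event and continuity of measure.
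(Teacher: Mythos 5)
Your proposal is correct and follows essentially the same route as the paper's proof: Proposition \ref{prop:delta-dim-is-small-with-small-prob} gives a uniform bound $\leq e^{-1}$ over $N$, monotonicity in $\delta$ transfers this to the $\delta\to 0$ limit, continuity of $\PP$ from below passes to $N\to\infty$, and Lemma \ref{lemma:tail-event} with Kolmogorov's 0--1 law forces the probability to zero. The extra remark connecting $H_{m,\delta}\leq H^*_{m,\delta}$ to $\dim_{qA}$ is a point the paper makes earlier in the section rather than inside the theorem's proof, but the substance is identical.
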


Since the upper Assouad dimension is an upper bound for the upper quasi-Assouad dimension, we have\footnote{This is also easy to see directly from the definition.}

\begin{corollary} \label{cor: A-dim-infinite}
Let $\PP$ be the distribution on $\P([0,1]^d)$ constructed in Section \ref{sec:distn-on-meas}. Then for $\PP$-a.e. $\mu$, $\dim_A \mu = \infty$.
\end{corollary}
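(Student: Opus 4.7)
The plan is to invoke Theorem \ref{thm:qA-dim-infinite-a-e} together with the elementary comparison $\dim_{qA}\mu \leq \dim_A\mu$ recorded in Section \ref{sec:star-char}. By definition, $H(\mu,0)$ is the infimum of exponents $s$ for which the ratio inequality holds over \emph{all} pairs $0<r<R<1$, whereas $H(\mu,\delta)$ imposes the inequality only on the restricted range $r<R^{1+\delta}$. A smaller family of constraints permits a smaller infimum, so $H(\mu,\delta)\leq H(\mu,0)$ for every $\delta>0$; passing to the limit as $\delta\to 0$ yields $\dim_{qA}\mu \leq \dim_A\mu$. Combined with Theorem \ref{thm:qA-dim-infinite-a-e}, which asserts $\dim_{qA}\mu = \infty$ for $\PP$-a.e.\ $\mu$, this immediately delivers $\dim_A\mu = \infty$ for $\PP$-a.e.\ $\mu$.

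As hinted by the footnote, a direct argument is also available. One could adapt the scheme of Proposition \ref{prop:delta-dim-is-small-with-small-prob} by eliminating the role of $\delta$ entirely: instead of restricting attention to nodes at level $\lfloor m/\delta\rfloor$, one could work at any single level $k$ and estimate the probability that every node $\omega$ with $|\omega|=k$ has a central child $\omega'$ whose relative mass $m_T(\omega')/m_T(\omega)$ is at least $2^{-Nk}$. Because the Beta$(1,2^d-1)$ marginals place positive mass on arbitrarily small intervals near $0$, and because children of distinct ancestors are independent, a Borel--Cantelli estimate forces this probability to vanish across levels, so the event $\{\dim_A\mu<\infty\}$ has measure zero.

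I do not foresee any real obstacle, since the probabilistic heavy lifting is already carried out in Theorem \ref{thm:qA-dim-infinite-a-e}; the present corollary reduces to a definitional monotonicity observation. The first route (via the monotonicity inequality) is the cleaner and shorter one, so that is the proof I would write down.
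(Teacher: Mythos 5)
Your proof is correct and takes essentially the same approach as the paper: both deduce the corollary from Theorem \ref{thm:qA-dim-infinite-a-e} via the monotonicity inequality $\dim_{qA}\mu \leq \dim_A\mu$ already recorded in Section \ref{sec:star-char}. Your justification of that inequality (weakening the constraint set as $\delta$ decreases only enlarges the infimum) is sound, and the direct argument you sketch is indeed what the paper's footnote alludes to.
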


\subsection{Non-Positivity of lower quasi-Assouad dimension}
The proof that the lower quasi-Assouad dimension is $\PP$-a.s. zero is similar.

Define 
$$h_{m,\delta}(\mu):= \min_{|\omega|=\floor{\frac{m}{\delta}}} \min_{\substack{\omega'\text{ a central} \\ \text{child of }\omega}} \frac{m_T(\omega)}{m_T(\omega')}.$$

Similarly, $h_{m,\delta}^*(\mu) \leq h_{m,\delta}(\mu)$, and thus to show the lower quasi-Assouad dimension is almost surely zero, it is enough to show for $\PP$-a.e. $\mu$,
\[\lim_{\delta\rightarrow 0} \liminf_{m\rightarrow\infty} \frac{\log_2 h_{m,\delta}(\mu)}{m} = 0. \]

As in Lemma \ref{lemma:tail-event}, the set in question is measurable and is a tail event.

\begin{proposition} \label{prop:lower-delta-dim-is-large-with-small-prob}
For all $N\in \NN$, there exists $\delta>0$ (depending on $N$) such that
\[\PP\left\{\mu: \liminf_{m\rightarrow\infty} \frac{\log_2 h_{m,\delta}(\mu)}{m}\geq \frac{1}{N}\right\} \leq e^{-1}.\]

\begin{proof}
Repeating the proof of Proposition \ref{prop:delta-dim-is-small-with-small-prob}, we see that for every $m$,
\begin{align*}
\PP\left\{\mu: \frac{\log_2 h_{m,\delta}(\mu)}{m} \geq \frac{1}{N}\right\}
&\leq \PP\left\{X_1...X_m\leq 2^{-m/N}\right\}^{2^{\floor{m/\delta}}} \\
&= \left(1-\PP\left\{X_1...X_m \geq 2^{-m/N}\right\}\right)^{2^{\floor{m/\delta}}},
\end{align*}

\noindent where the $X_i$'s are independent with distribution Beta($1,2^d-1$). By their independence,
\[\PP\left\{X_1...X_m \geq 2^{-m/N}\right\} \geq \PP\left\{X_1 \geq 2^{-1/N}\right\}^m = (1-2^{-1/N})^{(2^d-1) m}.\]

Taking $\delta$ small enough so that $(1-2^{-1/N})^{2^d-1} > 2^{-1/\delta}$, and using the fact that $\limsup_m (1-\frac{1}{a_m})^{b_m} \leq e^{-1}$ provided $b_m\geq a_m$ eventually, we have
\begin{align*}
\PP\left\{\mu: \liminf_{m\rightarrow\infty}\frac{\log_2 h_{m,\delta}(\mu)}{m} \geq \frac{1}{N}\right\} 
&\leq \limsup_{m\rightarrow\infty}\PP\left\{\mu: \frac{\log_2 h_{m,\delta}(\mu)}{m} \geq \frac{1}{N}\right\} \\
&\leq \limsup_{m\rightarrow\infty} \left(1-(1-2^{-1/N})^{(2^d-1)m}\right)^{2^{\floor{\frac{m}{\delta}}}}\\ 
&\leq  e^{-1}.
\end{align*}

The rest of the proof is then similar to Proposition \ref{prop:delta-dim-is-small-with-small-prob}.
\end{proof}
\end{proposition}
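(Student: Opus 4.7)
The plan is to mirror the proof of Proposition \ref{prop:delta-dim-is-small-with-small-prob}, with all inequalities appropriately reversed. Measurability of the event in question follows exactly the same pattern as before: $h_{m,\delta}(\mu)$ is built from the coordinate variables $X^{\omega}_i$ by finitely many min, max, product, and reciprocal operations, and the outer limits may be taken along a countable sequence of $\delta$'s and $m$'s.

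For the main probabilistic estimate, I would first unravel the event $\{h_{m,\delta}(\mu) \geq 2^{m/N}\}$: it asserts that for every $\omega$ of length $\floor{m/\delta}$ and every central child $\omega'$ of $\omega$, the ratio $m_T(\omega')/m_T(\omega)$ — a product of $m$ independent $\mathrm{Beta}(1,2^d-1)$ random variables taken along the path from $\omega$ to $\omega'$ — is at most $2^{-m/N}$. Since the edge-variables below distinct length-$\floor{m/\delta}$ nodes form disjoint families, the events indexed by such $\omega$ are independent. Selecting just one central child per $\omega$ gives the loose (but sufficient) upper bound
\[
\PP\{h_{m,\delta}(\mu) \geq 2^{m/N}\} \;\leq\; \PP\{X_1 \cdots X_m \leq 2^{-m/N}\}^{2^{\floor{m/\delta}}},
\]
where $X_1,\ldots,X_m$ are i.i.d.\ $\mathrm{Beta}(1,2^d-1)$.

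Next, I would control the inner probability by passing to the complement: if every factor satisfies $X_i \geq 2^{-1/N}$, then the product is at least $2^{-m/N}$. Using $\PP(X_i \geq c) = (1-c)^{2^d-1}$ for the relevant beta law and invoking independence,
\[
\PP\{X_1 \cdots X_m \geq 2^{-m/N}\} \;\geq\; (1-2^{-1/N})^{m(2^d-1)}.
\]
Choosing $\delta$ small enough that $(1-2^{-1/N})^{2^d-1} > 2^{-1/\delta}$ guarantees that $b_m := 2^{\floor{m/\delta}}$ eventually dominates $a_m := (1-2^{-1/N})^{-m(2^d-1)}$. Then the elementary fact that $\limsup_m (1 - 1/a_m)^{b_m} \leq e^{-1}$ when $b_m \geq a_m$ eventually, combined with passing the estimate through the outer $\liminf$, yields the desired bound.

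The main obstacle here is not conceptual but book-keeping: one must carefully track that the quantifier structure (liminf vs.\ limsup, min vs.\ max) is flipped in the right places when translating from the upper case, and verify that the choice of $\delta$ is compatible with the inequalities actually needed in the final $\limsup$ step. No ingredient beyond those appearing in the proof of Proposition \ref{prop:delta-dim-is-small-with-small-prob} is required.
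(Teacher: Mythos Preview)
Your proposal is correct and follows essentially the same route as the paper's proof: the same reduction via independence across the $2^{\floor{m/\delta}}$ level-$\floor{m/\delta}$ nodes (using one central child per node), the same lower bound $\PP\{X_1\cdots X_m \geq 2^{-m/N}\} \geq (1-2^{-1/N})^{(2^d-1)m}$ via the factor-wise event $\{X_i \geq 2^{-1/N}\}$, the same choice of $\delta$ so that $(1-2^{-1/N})^{2^d-1} > 2^{-1/\delta}$, and the same appeal to $\limsup_m (1-1/a_m)^{b_m}\leq e^{-1}$. The only difference is expository: you spell out the quantifier book-keeping a bit more carefully than the paper, which simply writes ``repeating the proof of Proposition \ref{prop:delta-dim-is-small-with-small-prob}''.
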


As in the argument for Theorem \ref{thm:qA-dim-infinite-a-e}, we have the following

\begin{theorem} \label{thm:qL-dim-zero-a-e}
Let $\PP$ be the distribution on $\P([0,1]^d)$ constructed in Section \ref{sec:distn-on-meas}. Then
\[\PP\left\{\mu: \lim_{\delta\rightarrow 0} \liminf_{m\rightarrow\infty} \frac{\log_2 h_{m,\delta}(\mu)}{m} >0\right\}=0.\]
In particular, the lower quasi-Assouad dimension, and hence the lower Assouad dimension, are zero for $\PP$-a.e. $\mu$.
\end{theorem}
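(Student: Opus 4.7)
The plan is to mirror, essentially verbatim, the proof of Theorem \ref{thm:qA-dim-infinite-a-e}, with Proposition \ref{prop:lower-delta-dim-is-large-with-small-prob} playing the role of Proposition \ref{prop:delta-dim-is-small-with-small-prob}. As observed immediately before Proposition \ref{prop:lower-delta-dim-is-large-with-small-prob}, the inequality $h^*_{m,\delta}(\mu) \leq h_{m,\delta}(\mu)$ combined with the star characterization of $h(\mu,\delta)$ reduces the main claim to showing
\[\PP\Bigl\{\mu : \lim_{\delta \to 0} \liminf_{m \to \infty} \frac{\log_2 h_{m,\delta}(\mu)}{m} > 0\Bigr\} = 0.\]
Since $h^*_{m,\delta}(\mu) \geq 1$ trivially, this will force $\dim_{qL}\mu = 0$ almost surely, and then $\dim_L \mu \leq \dim_{qL}\mu$ gives $\dim_L \mu = 0$ almost surely.

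The first step is to repeat the proof of Lemma \ref{lemma:tail-event}: $h_{m,\delta}(\mu)$ is measurable with respect to $\sigma(X^\omega : |\omega| \geq \floor{m/\delta})$, so restricting the $\liminf$ in $m$ to $m \geq M$ with $\floor{M/\delta} \geq N$ places $\{\liminf_m \log_2 h_{m,\delta}(\mu)/m \geq c\}$ in $\sigma(X^\omega : |\omega| \geq N)$, and taking the limit in $\delta$ along a countable decreasing sequence preserves both measurability and membership in the tail field $\bigcap_N \sigma(X^\omega : |\omega| \geq N)$. Kolmogorov's 0-1 law then pins the probability of the displayed event at $0$ or $1$.

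The second step is the probabilistic upper bound. Given $N \in \NN$, I pick the corresponding $\delta > 0$ from Proposition \ref{prop:lower-delta-dim-is-large-with-small-prob}. Since $\liminf_m \log_2 h_{m,\delta}(\mu)/m$ decreases as $\delta \searrow 0$ (the monotonicity counterpart of what is used in Theorem \ref{thm:qA-dim-infinite-a-e}), one obtains
\[\Bigl\{\mu : \lim_{\delta \to 0} \liminf_{m} \tfrac{\log_2 h_{m,\delta}(\mu)}{m} \geq \tfrac{1}{N}\Bigr\} \subseteq \Bigl\{\mu : \liminf_{m} \tfrac{\log_2 h_{m,\delta}(\mu)}{m} \geq \tfrac{1}{N}\Bigr\},\]
whose probability is at most $e^{-1}$ by the proposition. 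Continuity of $\PP$ applied to the nested increasing union over $N$ then yields $\PP\{\lim_{\delta \to 0}\liminf_m > 0\} \leq e^{-1} < 1$; combined with the 0-1 law, this probability is forced to be $0$.

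The one step I expect to need care is justifying the monotonicity in $\delta$ at the level of $h_{m,\delta}$: the natural index pairs $(\omega, \omega')$ at different values of $\delta$ are not obviously nested as $\floor{m/\delta}$ changes, so in the worst case one would route through $h^*_{m,\delta}$ — for which monotonicity is immediate, since $h(\mu, \delta)$ itself decreases as $\delta \searrow 0$ — and recover the containment. Everything else is formal bookkeeping, with all of the genuinely probabilistic work concentrated in Proposition \ref{prop:lower-delta-dim-is-large-with-small-prob}.
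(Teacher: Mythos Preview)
Your proposal is correct and follows exactly the route the paper intends: the paper's own proof consists solely of the sentence ``As in the argument for Theorem~\ref{thm:qA-dim-infinite-a-e}'', and you have spelled out precisely that argument with Proposition~\ref{prop:lower-delta-dim-is-large-with-small-prob} replacing Proposition~\ref{prop:delta-dim-is-small-with-small-prob}. Your caution about monotonicity of $\liminf_m \tfrac{\log_2 h_{m,\delta}(\mu)}{m}$ in $\delta$ is well placed---the paper is equally informal on this point in the upper case---and your suggested fix via $h^*_{m,\delta}\le h_{m,\delta}$ and the genuine monotonicity of $h(\mu,\delta)$ is the clean way to make the containment rigorous.
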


\section{Extensions to Intermediate Assouad-like Dimensions}
\label{sec:extensions}
In this Section, we briefly indicate how our methods extend to the $\Phi$-dimensions introduced in \cite{GHM-phi-dim}. For simplicity, we focus only on the \emph{upper} $\Phi$-dimensions obtained by imposing the condition that $0<r<R^{1+\Phi(R)}<R<1$ in place of $0<r<R^{1+\delta}<R<1$ in the definition of quasi-Assouad dimension, where $\Phi: (0,1) \rightarrow \RR^+$ is a positive real-valued function. That is, we define $\dim_{\Phi} \mu$ to be
\[\inf\left\{s: \exists C>0, \, \forall 0<r<R^{1+\Phi(R)}<R<1\, , \sup_{x\in \supp\mu} \frac{\mu(B(x,R))}{\mu(B(x,r))}\leq C\left(\frac{R}{r}\right)^s\right\}.\]

We begin by finding a suitable *-characterization in this case: We want
\[R^{-\Phi(R)}=\frac{R}{R^{1+\Phi(R)}}<\frac{R}{r}\approx 2^m,\]
thus in $H_{m,\Phi}$, we consider only mini-measures whose support is a rescaling of a hypercube of side length $>R$, where $R$ satisfies $\varphi(R):=R^{-\Phi(R)}<2^m$.

\begin{example}
To recover the (upper) Assouad dimension, take $\Phi(R)=1/\log_2 R$, so that $\varphi(R)$ is a constant. Note that for large enough $m$, the condition $\varphi(R)<2^m$ is trivially satisfied, i.e. there are no constraint on $R$.
\end{example}

\begin{example}
To recover the function $H(\mu,\delta)$ in the definition of (upper) quasi-Assouad dimension, take $\Phi(R)=\delta$, so $\varphi(R)=R^{-\delta}$. In this case, the condition $\varphi(R)<2^m$ is equivalent to $R>2^{-m/\delta}$, which coincides with our previous definition in Section \ref{sec:star-char}.
\end{example}

Observe that $\varphi$ is nonincreasing in both cases. In the following, we assume $\varphi$ is nonincreasing and left-continuous. Define its \emph{quasi-inverse} to be $\varphi^{-1}(y):=\sup\left\{x\in (0,1): \varphi(x)\geq y\right\}$\footnote{This is an analog of the \emph{quantile function} in probablity theory. It follows from left continuity of $\varphi$ that the supremum can be replaced by maximum.}. It is characterized by the property that it is nonincreasing, right-continuous, and $\varphi(x)\geq y$ if and only if $x\leq \varphi^{-1}(y)$. 

Thus, our requirement $\varphi(R)<2^m$ on $R$ is equivalent to $R>\varphi^{-1}(2^m)=2^{-n}$, where $n=-\log_2 \varphi^{-1}(2^m)$.

Repeating the proof for the infiniteness of quasi-Assouad dimension, Proposition \ref{prop:delta-dim-is-small-with-small-prob}, we see that a sufficient condition for
\[\PP\left\{\limsup_{m\rightarrow\infty} \frac{\log_2 H_{m,\Phi}}{m}\leq N\right\}\leq e^{-1}\]

\noindent is that $(1-\frac{1}{2^{Nm}})^{2^{\floor{-\log_2 \varphi^{-1}(2^m)}}}$ be bounded away from 1 for $m$ sufficiently large. This is true provided $-\log_2 \varphi^{-1}(2^m)\geq Nm$, or equivalently,
\[\varphi(2^{-Nm})\leq 2^m\]
in view of the property of quasi-inverse.

This condition would be satisfied if $\varphi(x)\leq x^{-1/N}$, when $x$ is small enough.

\begin{example}
Let $\varphi(x)$ be constant. We see that for small enough $x$, the condition $\varphi(x)<x^{-1/N}$ is trivially satisfied. It follows that the upper Assouad dimension is infinite for $\PP$-a.e. $\mu$.
\end{example}

\begin{example}
Let $\varphi(x)=x^{-\delta}$. The condition $\varphi(x)<x^{-1/N}$ is satisfied for $0<x<1$ when $\delta\leq 1/N$. This is the conclusion of Proposition \ref{prop:delta-dim-is-small-with-small-prob}.
\end{example}

Recall $\varphi(x)=x^{-\Phi(x)}$. Thus, if $\limsup_{x\rightarrow 0} \Phi(x) = 0$, we would have $\varphi(x) = x^{-\Phi(x)}\leq x^{-1/N}$ for small enough $x$, so that 
\[\PP\left\{\mu: \limsup_{m\rightarrow\infty} \frac{\log_2 H_{m,\Phi}(\mu)}{m}<\infty \right\}=0.\]

It might be the case that $\Phi_n(x)$ does not tend to zero as $x\rightarrow 0$, but $\Phi_n\rightarrow \Phi$ pointwise and $\limsup_{x\rightarrow 0} \Phi(x)=0$. Then we may define some ``quasi-$\Phi$" dimensions analogous to the quasi-Assouad dimension, and such dimensions will be $\PP$-almost surely infinite.

To summarize the above discussion
\begin{theorem}
Let $\PP$ be the distribution on $\P([0,1]^d)$ constructed in Section \ref{sec:distn-on-meas}. We have
\begin{enumerate}
\item Let $\Phi: (0,1) \rightarrow \RR^+$ be a  function such that $\varphi(x) = x^{-\Phi(x)}$ is nonincreasing and left-continuous on some neighbourhood of 0. If $\limsup_{x\rightarrow 0} \Phi(x)=0$, then the (upper) $\Phi$-dimension, 
\[\dim_{\Phi}\mu = \limsup_{m\rightarrow\infty} \frac{\log_2 H_{m,\Phi}(\mu)}{m},\]
is $\PP$-almost surely infinite.

\item Let $\Phi_n: (0,1) \rightarrow \RR^+$ be a sequence of functions such that $\varphi_n(x) = x^{-\Phi_n(x)}$ are nonincreasing and left-continuous on some neighbourhood of 0. Suppose also that $\Phi_n \searrow \Phi$ pointwise, and that $\limsup_{x\rightarrow 0} \Phi(x)=0$. Then the \emph{(upper) quasi-$\Phi$ dimension},
\[\dim_{q\Phi}\mu:= \lim_{n\rightarrow\infty} \limsup_{m\rightarrow\infty} \frac{\log_2 H_{m,\Phi_n}(\mu)}{m},\]
is $\PP$-almost surely infinite.
\end{enumerate}

\end{theorem}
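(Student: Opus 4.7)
The plan is to adapt the two-step strategy of Section \ref{sec:qA-dim-infinite} (a Proposition \ref{prop:delta-dim-is-small-with-small-prob}-style probability bound combined with Kolmogorov's 0-1 law) to the $\Phi$-dimension setting, formalizing the outline given in the discussion preceding the theorem. As in Lemma \ref{lemma:tail-event}, the events $\{\dim_\Phi\mu \leq N\}$ and $\{\dim_{q\Phi}\mu \leq N\}$ are tail events, since $H_{m,\Phi}(\mu)$ depends only on $X^\omega$ with $|\omega|\geq\lfloor -\log_2 \varphi^{-1}(2^m)\rfloor$ and this cutoff tends to $\infty$ with $m$ (which follows from $\limsup_{x\to 0}\Phi(x)=0$). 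Hence by the 0-1 law it suffices to push the probability of these events strictly below $1$, say to $e^{-1}$, for each $N\in\NN$.

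For part (1): fix $N$. The hypothesis $\limsup_{x\to 0}\Phi(x)=0$ gives $x_0>0$ with $\Phi(x) \leq 1/N$ for $0<x<x_0$, so $\varphi(x) \leq x^{-1/N}$ on $(0,x_0)$, and the characterisation $\varphi(x)\geq y \Leftrightarrow x\leq \varphi^{-1}(y)$ of the quasi-inverse yields $\varphi^{-1}(2^m) \leq 2^{-Nm}$ for $m$ large. Replaying the proof of Proposition \ref{prop:delta-dim-is-small-with-small-prob} — the level $\lfloor -\log_2 \varphi^{-1}(2^m)\rfloor$ now contains at least $2^{Nm}$ independent starting nodes, so the quantity $(1-2^{-Nm})^{2^{\lfloor -\log_2 \varphi^{-1}(2^m)\rfloor}}$ stays bounded away from $1$ for $m$ large — gives $\PP(\dim_\Phi\mu\leq N)\leq e^{-1}$, and the 0-1 law upgrades this to probability $0$. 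Arbitrariness of $N$ gives $\dim_\Phi\mu=\infty$ almost surely.

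For part (2): the monotonicity $\Phi_n\searrow\Phi$ transfers to $\varphi_n\searrow\varphi$, and $\varphi_n^{-1}$ is likewise monotone decreasing in $n$, so the constraint on the side lengths of admissible mini-measures in $H_{m,\Phi_n}$ relaxes with $n$; consequently $H_{m,\Phi_n}$ is nondecreasing in $n$ and the limit defining $\dim_{q\Phi}\mu$ is in fact a supremum. This yields $\{\dim_{q\Phi}\mu\leq N\}\subseteq\{\dim_{\Phi_n}\mu\leq N\}$ for every $n$, and the task reduces to producing, for each $N$, some $n$ for which part (1) applies to $\Phi_n$ and delivers $\PP(\dim_{\Phi_n}\mu\leq N)\leq e^{-1}$. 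The main obstacle is precisely this last step: part (1) needs $\Phi_n(x)\leq 1/N$ on a whole neighbourhood of $0$, a \emph{uniform} condition which is strictly stronger than the pointwise data $\Phi_n\searrow\Phi$ together with $\limsup_{x\to 0}\Phi(x)=0$. I would extract the required uniform control by combining the monotone decrease $\Phi_n\searrow\Phi$ with the fact that the $\varphi_n$ are nonincreasing and left-continuous near $0$ — for instance by a Dini-type argument, or under the natural interpretation that $\limsup_{x\to 0}\Phi_n(x)\to 0$ as $n\to\infty$ (which is automatic in the motivating quasi-Assouad case $\Phi_n\equiv\delta_n\searrow 0$). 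Once this uniformity is secured, the 0-1 law on the tail event $\{\dim_{q\Phi}\mu\leq N\}$ finishes the argument exactly as for Theorem \ref{thm:qA-dim-infinite-a-e}.
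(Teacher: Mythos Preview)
Your treatment of part (1) coincides with the paper's: the theorem is stated as a summary of the discussion immediately preceding it, and that discussion does exactly what you do --- introduce the quasi-inverse $\varphi^{-1}$, use $\limsup_{x\to 0}\Phi(x)=0$ to get $\varphi(x)\leq x^{-1/N}$ near $0$, convert this via the quasi-inverse to $-\log_2\varphi^{-1}(2^m)\geq Nm$, and then rerun Proposition~\ref{prop:delta-dim-is-small-with-small-prob} at this new level together with the 0--1 law. Your explicit check that the events are tail events is slightly more careful than the paper, which leaves this implicit.

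For part (2) you are in fact more scrupulous than the paper. The paper's entire argument for (2) is one sentence asserting that if $\Phi_n\to\Phi$ pointwise with $\limsup_{x\to 0}\Phi(x)=0$ then the quasi-$\Phi$ dimension is almost surely infinite; no mechanism is given for transferring the part-(1) estimate to some $\Phi_n$. You correctly isolate the obstruction: running part (1) for a fixed $\Phi_n$ requires $\Phi_n(x)\leq 1/N$ on a full neighbourhood of $0$, and pointwise convergence $\Phi_n\searrow\Phi$ does not supply this. Your proposed Dini-type repair does not work under the stated hypotheses alone: take $\Phi\equiv 0$ and $\Phi_n=\mathbf{1}_{(0,1/n]}$, so that $\Phi_n\searrow\Phi$ pointwise, each $\varphi_n(x)=x^{-\Phi_n(x)}$ is nonincreasing and left-continuous, yet $\limsup_{x\to 0}\Phi_n(x)=1$ for every $n$. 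So the gap you flag is genuine, and it is shared by the paper's own treatment; it is not a deficiency of your proposal relative to the paper. Your parenthetical fallback --- reading the hypotheses as including $\limsup_{x\to 0}\Phi_n(x)\to 0$, which holds in the motivating case $\Phi_n\equiv\delta_n\searrow 0$ --- is the natural strengthening under which the argument goes through cleanly.
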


\section{Questions}
We end with a list of questions that we have not yet been able to resolve.

\label{sec:ques}
\begin{enumerate}
\item We saw that $\dim_{qA} \mu = \lim_{\delta\rightarrow 0} H(\mu,\delta)$ is $\PP$-a.s. infinite. Is it true that $H(\mu,\delta)$ is almost surely infinite for some $\delta$? If so, is there a critical $\delta$ where $H(\mu,\delta)$ becomes infinite?

\item It is clear that $\lim_{\delta\rightarrow\infty} H(\mu,\delta)$ is greater than the maximum local dimension. Are they almost surely equal? If not, how much do they differ?

\item We obtained necessary conditions for upper $\Phi$-dimensions to be infinite. How sharp is the result? Can we find precise condition for $\dim_{\Phi}$ to be infinite?

\item What is the answer to the analogs for the lower quasi-Assouad dimension of the above questions?

\item We showed that $\dim_{qA} \mu$ is almost surely infinite by introducing a smaller quantity defined by $H_{m,\delta}(\mu)$ (instead of $H^*_{m,\delta}(\mu)$) and showed the smaller quantity infinite. Do these two quantities agree, at least in some almost sure sense?

\item It can be shown that $\PP$-a.e. measures are supported on $[0,1]^d$, so our approach does not give much information about measures without full support. Is there a way to define random measures supported on arbitrary sets?
\end{enumerate}

\section*{Acknowledgements}
The author would like to thank Professors Kathryn Hare and Kevin Hare for their helpful comments and suggestions throughout this USRA project, as well as their careful reading of the drafts, Professors Spiro Karigiannis and Ruxandra Moraru for their supports on evaluating integrals, Zhenyuan Zhang for some interesting discussions about the work of Furstenberg. The author is also grateful to Professor Ram Murty for his inspiring talk in CUMC 2019. This research was supported in part by NSERC grants RGPIN 2016-03719 and 2019-03930.

\label{last page}
\end{document}